\documentclass[12pt,a4paper]{article}
\usepackage{amsmath,amssymb,amsthm}
\usepackage[margin=1in]{geometry}
\usepackage[colorlinks=true,linkcolor=blue,urlcolor=blue,citecolor=blue]{hyperref}

\newtheorem{theorem}{Theorem}[section]
\newtheorem{lemma}[theorem]{Lemma}

\theoremstyle{remark}
\newtheorem{remark}[theorem]{Remark}

\newcommand{\R}{\mathbb R}
\newcommand{\norm}[1]{\left\lVert#1\right\rVert_2}

\title{Doubly exponential convergence of the cyclic steepest descent method for strictly convex quadratics in arbitrary dimensions}
\author{Ran Gu\thanks{NITFID, School of Statistics and Data Science, LPMC, KLMDASR, LEBPS and AAIS, Nankai University, Tianjin 300071, People's Republic of China. E-mail: rgu@nankai.edu.cn.}}
\date{\today}

\begin{document}
\maketitle

\begin{abstract}
We study the cyclic steepest descent method (CSD) for strictly convex quadratic minimization.
CSD repeats, for $j$ consecutive iterations, the exact steepest-descent step size computed
at the beginning of each cycle.
The only rigorous result for the real algorithm has so far been restricted to two
dimensions, where the cycle-starting gradient sequence is known to converge doubly
exponentially.
For the simplified (``simple'') model obtained by discarding bounded logarithmic terms,
Dai and Fletcher predicted that CSD is superlinear whenever the number $n$ of distinct
eigenvalues is below twice the cycle length $m$, and linear otherwise.
Let $A$ be symmetric positive definite with $q$ distinct eigenvalues, and suppose
$2j>q$. We prove the superlinear side of this threshold for the real algorithm in arbitrary
dimensions, and in fact obtain a faster, doubly exponential, decay.
Except for a Lebesgue-null set of initial points, for every $\kappa$ below an explicit
positive threshold $\kappa_0$ (given in Theorem~\ref{thm:full}), the cycle-starting
gradient satisfies
\[
 \norm{g_k}\le \exp(-C e^{\kappa k}),
\]
and the full gradient and iterate errors satisfy analogous bounds with exponent $\kappa/j$
after $m$ iterations.
This is the first rigorous proof for the real CSD in arbitrary dimensions, including
repeated eigenvalues, and confirms the threshold $2j>q$ (the simple-model prediction
$n<2m$); the doubly exponential rate is faster than the superlinear one predicted by the
simple model.
The proof combines arbitrary-reference ratio coordinates, inverse-image volume contraction,
thin-band estimates, and a per-component Borel--Cantelli argument.
\end{abstract}

\noindent\textbf{Keywords}: cyclic steepest descent; doubly exponential convergence;
strictly convex quadratic programming; superlinear convergence; asymptotic behavior.

\section{Introduction}
Consider the unconstrained minimization problem
\[
\min_{u\in\mathbb R^d} f(u),
\]
where $f$ is continuously differentiable.
Gradient methods take the form $u_{m+1}=u_m-t_m\nabla f(u_m)$ and, because they require
only first-order information and little storage, remain central to large-scale optimization.
The classical steepest descent (SD) method of Cauchy~\cite{cauchy1847} chooses the step
$t_m^{SD}=h_m^\top h_m/(h_m^\top A h_m)$, with $h_m=\nabla f(u_m)$, that minimizes
$f(u_m-t h_m)$ along the line.
For a strictly convex quadratic $f(u)=\frac12 u^\top A u+b^\top u$ with $A=A^\top\succ0$,
the error decays geometrically~\cite{akaike1959}, that is, $R$-linearly, and exhibits the
well-known zigzag phenomenon; the rate is directly governed by the condition number and
is very slow when the condition number is large.

To overcome this limitation, Barzilai and Borwein introduced the two-point BB
method~\cite{barzilai1988}, which abandons exact line search and constructs a step size from
the gradient displacement of the previous iterate, thereby acquiring an approximate
quasi-Newton property.
In the quadratic case, the two BB step sizes are reciprocal Rayleigh quotients of the
current gradient with respect to $A$; the method is $R$-superlinearly convergent in two
dimensions~\cite{barzilai1988} and $R$-linearly convergent in general
dimensions~\cite{dailiao2002}, and performs considerably better than SD in practice.
A large family of step-size methods followed.
Friedlander, Mart\'inez, Molina, and Raydan introduced the class of
``gradient methods with retards''~\cite{friedlander1999}: the exact SD step size computed
at the current gradient is reused for a fixed number of consecutive iterations, and the
cyclic steepest descent method (CSD) belongs to this class.
Dai and Yuan proposed the alternate minimization (AM) gradient method, equivalent to the
CaBB method independently introduced by Raydan and Svaiter~\cite{dayuan2003,raydan2002},
in which the step size alternately minimizes the function value and the gradient norm along
the steepest-descent line; they proved it to be $Q$-superlinearly convergent in two
dimensions and $Q$-linearly convergent in arbitrary dimensions.
Fletcher later developed the limited-memory steepest descent method
(LMSD)~\cite{fletcher2012}, which estimates the spectrum of the Hessian from Krylov-subspace
information over the last few gradients and terminates a $d$-dimensional quadratic problem
in $2d-1$ steps.

A parallel line of work seeks finite termination:
Yuan proposed a step size that terminates a two-dimensional quadratic problem in three
steps~\cite{yuan2006}, and Xie, Liu, Sun, and Yuan recently extended this to a step size that
terminates a three-dimensional quadratic problem in five steps, in cyclic
form~\cite{xie2026}.
In contrast to constructing a ``perfect step size'' for finite termination, we return to
the simplest cyclic SD step and study its asymptotic decay rate in arbitrary dimensions.

Regarding the convergence of CSD itself, a clear gap separates rigorous results from
asymptotic predictions.
Dai and Fletcher systematically studied the asymptotic behaviour of CSD and the BB
method~\cite{daifletcher2005}: by taking logarithms of the individual gradient components,
ordering them by magnitude, and discarding bounded logarithmic terms in the recurrence,
they obtained a simplified (simple) model independent of the eigenvalues of $A$;
for this model they proved that CSD is $m$-step superlinear when the effective dimension $n$
is below twice the cycle length $m$, and degenerates to linear convergence when $n\ge 2m$.
Gu and Du later used the same asymptotic framework for LMSD and its
modification~\cite{gudu2021}, showing that the superlinear threshold can be relaxed from
$O(m)$ to $O(m^2)$.
It should be emphasized that these rigorous theorems concern only the simplified model:
Dai and Fletcher explicitly state that a rigorous proof for the real algorithm in arbitrary
dimensions has not yet been established, and one can only infer the behaviour from numerical
observations and comparison with the simplified model.

Rigorous results for the real algorithm are still confined to special cases.
Shen, Li, and Dai first returned to the real CSD~\cite{shen-v1} and proved that, in two
dimensions, the cycle-starting gradient sequence $\{g_k\}$ converges doubly exponentially
to zero for almost every starting point.
This is the first rigorous result for the real CSD, but it is confined to two dimensions;
whether the real CSD in arbitrary dimensions obeys the threshold predicted by the simple
analysis has remained open for more than two decades.

For convenience, we denote by $q$ the number of distinct eigenvalues and by $j$ the cycle
length, corresponding respectively to the effective dimension $n$ and the cycle length $m$
of Dai and Fletcher.
We prove that this prediction holds for the real CSD, and is in fact stronger.
When $A$ has $q$ distinct eigenvalues and the cycle length $j$ satisfies
\begin{equation}
2j>q,
\end{equation}
except on a set of initial points of Lebesgue measure zero, the entire actual iteration
sequence of CSD$(j)$ converges doubly exponentially:
the gradient decays like $\exp(-C e^{\kappa k})$, where $\kappa$ may approach an explicitly
given critical value $\kappa_0$ (defined in \eqref{eq:kappa0-def}; notation as in
Section~\ref{sec:notation}).
The main contributions are as follows:
\begin{itemize}
  \item We establish the first rigorous convergence theorem for the real CSD (rather than
        its simplified model) in arbitrary dimensions, including repeated eigenvalues,
        extending the two-dimensional doubly exponential result of Shen, Li, and Dai to
        arbitrary dimensions.
  \item We rigorously verify that the threshold $2j>q$ (corresponding to $n<2m$ in the
        analysis of Dai and Fletcher) coincides with the simple-model prediction, and show
        that the positivity condition on the convergence exponent is the same inequality.
        Moreover, the real algorithm converges doubly exponentially, a faster rate than
        the superlinear one predicted by the simple model.
  \item The per-coordinate basis, inverse-image volume contraction, and thin-band estimates
        developed here may serve as a template for the rigorous convergence analysis of
        other cyclic or retarded gradient methods.
\end{itemize}

The remainder of this paper is organized as follows.
Section~\ref{sec:notation} states the problem, algorithm, and notation.
We then establish invertibility of the cycle map and inverse-image volume contraction in
log-ratio coordinates, followed by measure-theoretic preliminaries, thin-band estimates,
and a per-component Borel--Cantelli estimate.
Finally, these pieces combine into the main theorem, with repeated eigenvalues and the
actual iteration steps handled at the end.

\section{Problem, algorithm, and notation}\label{sec:notation}
Consider the $d$-dimensional strictly convex quadratic problem
\[
 f(u)=\tfrac12 u^\top A u+b^\top u,\qquad
 A=A^\top\succ0,\qquad u^*=-A^{-1}b.
\]
Fix a positive integer cycle length $j$.
CSD$(j)$ starts from an initial point $u_0$ and, for each cycle $k$, sets
\[
 u_{jk,0}=u_{jk},
\]
and performs $j$ consecutive steps with the same step size $t_k$:
\begin{equation}
 u_{jk+s+1}=u_{jk+s}-t_k\bigl(Au_{jk+s}+b\bigr),
 \qquad s=0,1,\dots,j-1.
 \label{eq:u-iteration}
\end{equation}
The step size is determined by the gradient at the beginning of the cycle:
\[
 t_k=\frac{g_k^\top g_k}{g_k^\top A g_k},\qquad
 g_k=Au_{jk}+b.
\]
Denote the gradient at the $m$-th actual iterate by
\[
 h_m=Au_m+b,
\]
so that the gradient at the beginning of a cycle satisfies $g_k=h_{jk}$.
The cycle index $k$ and the actual iteration index $m$ are related by
\[
 m=jk+s,\qquad 0\le s<j.
\]
From \eqref{eq:u-iteration} and $h_m=Au_m+b$, the gradients satisfy
\[
 h_{jk+s}=(I-t_kA)^s g_k\quad(0\le s\le j),\qquad
 g_{k+1}=(I-t_kA)^j g_k.
\]
The iteration stops when the gradient vanishes, after which all subsequent gradients are
set to zero.
We say that a sequence $\{g_k\}$ converges to zero doubly exponentially if there are
constants $\kappa>0$ and $C>0$ such that
\[
 \norm{g_k}\le \exp(-C e^{\kappa k})
\]
for all sufficiently large $k$, equivalently,
\[
 \liminf_{k\to\infty}\frac1k\log\bigl(-\log\norm{g_k}\bigr)\ge \kappa>0.
\]
Doubly exponential convergence is stronger than $R$-superlinear convergence and faster
than any polynomial-order $R$-convergence: for every $p>0$,
\[
 \limsup_{k\to\infty}\norm{g_k}^{1/k^p}=0.
\]

\begin{theorem}[Doubly exponential convergence in arbitrary dimensions]
\label{thm:full}
Let $A\in\R^{d\times d}$ be a fixed symmetric positive definite matrix with $q$ distinct
eigenvalues. Assume $q\ge2$ and fix an integer cycle length $j$ such that
\[
 2j>q.
\]
Define
\begin{equation}
 \kappa_0:=
 \begin{cases}
  \log(2j-1), & q=2,\\[6pt]
  \min\!\left\{\dfrac{1}{q-1}\log\dfrac{2j-1}{q-1},\ \dfrac{1}{q-2}\right\}, & q\ge3.
 \end{cases}
 \label{eq:kappa0-def}
\end{equation}
Then $\kappa_0>0$, and there is a set $\mathcal E_{A,j}$ of Lebesgue measure zero in the
initial-point space such that, for every $u_0\notin\mathcal E_{A,j}$ and every
$0<\kappa<\kappa_0$, there is $C=C(\kappa,u_0)>0$ for which CSD$(j)$ satisfies
\[
 \norm{g_k}\le \exp(-C e^{\kappa k}),\qquad
 \norm{h_m}\le \exp(-C e^{(\kappa/j)m}),\qquad
 \norm{u_m-u^*}\le \exp(-C e^{(\kappa/j)m})
\]
for all sufficiently large $k,m$.
If $q=1$, the method terminates in one step.
\end{theorem}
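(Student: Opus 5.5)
Diagonalize $A$ and group eigenspaces: let $0<\lambda_1<\dots<\lambda_q$ be the distinct eigenvalues and write $g_k=\sum_i P_i g_k$, with $P_i$ the eigenprojections. Since $(I-t_kA)^j$ acts on each eigenspace as the scalar $(1-t_k\lambda_i)^j$, the direction of $P_ig_k$ inside its eigenspace never changes. Hence the dynamics reduce to the $q$ scalars $a_i^{(k)}:=\|P_ig_k\|$, with $t_k=\sum a_i^2/\sum\lambda_ia_i^2$ and $a_i^{(k+1)}=|1-t_k\lambda_i|^j a_i^{(k)}$. This disposes of repeated eigenvalues, and $q=1$ terminates immediately because $t_0=1/\lambda_1$. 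For $q\ge2$, I would pass to log-ratio coordinates $x_i=\log(a_i/a_r)$ relative to a reference index $r$. I would choose $r$ flexibly, the ``arbitrary-reference'' idea: at each cycle $r$ is the index of the dominant component, so $t_k\approx 1/\lambda_r$ and the factor $|1-t_k\lambda_r|$ is tiny.

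The key heuristic mirrors the simple model. Suppose component $r$ dominates and the others are smaller by $e^{-L}$. Then $1-t_k\lambda_r=O(e^{-2L})$, so after the cycle component $r$ has shrunk by $e^{-2jL}$ while the others change by bounded factors. Hence the new gap is about $(2j-1)L$ minus contributions from the other $q-1$ components, which compete to become dominant. Iterating over a block of roughly $q-1$ cycles, in which each component in turn is annihilated, should give amplification of the log-gap by a factor $\frac{2j-1}{q-1}$ per $q-1$ cycles. That produces the exponent $\frac1{q-1}\log\frac{2j-1}{q-1}$, which is positive exactly when $2j>q$. The other branch of $\kappa_0$, namely $1/(q-2)$, I expect to come from the worst case in which the ordering of non-dominant components must be refreshed. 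I would isolate this as a deterministic lemma: if the log-gaps $-\log|1-t_k\lambda_i|$ stay above $\epsilon e^{\kappa k}$-type thresholds, then $-\log\norm{g_k}\ge Ce^{\kappa k}$. The translation to $h_m$ and $u_m-u^*$ is routine: $\norm{h_{jk+s}}\le \max_i|1-t_k\lambda_i|^s\norm{g_k}$ with $t_k\in[1/\lambda_q,1/\lambda_1]$, and $u_m-u^*=A^{-1}h_m$. Writing $k=\lfloor m/j\rfloor$ gives the exponent $\kappa/j$, after adjusting $C$.

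The genuine difficulty is the exceptional behaviour. The deterministic amplification fails when two components become nearly tied, i.e.\ when $t_k\lambda_i$ lands in a thin band around $1$ for a non-dominant $i$, or when the dominant index is ambiguous. This is where measure theory enters. First I would show that the cycle map $\Phi$ in log-ratio coordinates is a local diffeomorphism (invertible off an analytic null set). I would then show that inverse images contract volume: the Jacobian of $\Phi^{-1}$ is bounded by $e^{-c\cdot(\text{current gap})}$, because $\Phi$ expands the dominant direction by about $2j$ times the gap. Next, I would define bad sets $B_k$, the thin bands of width $\exp(-\epsilon e^{\kappa k})$ in which some component ratio falls below the threshold needed for the amplification step. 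Their Lebesgue measure is at most the width times a polynomial factor. Pulling back through $k$ iterates with the volume-contraction bound gives $|\Phi^{-k}(B_k)\cap K|\le \exp(-\epsilon' e^{\kappa k})$ on compacts $K$ for $\kappa<\kappa_0$. This is summable, so Borel--Cantelli, run per component $i$ and then unioned over the finitely many $i$, shows that almost every initial point visits only finitely many bad sets. Together with the preimage of the analytic null set on which some $a_i^{(0)}=0$ or $g$ vanishes early, these points form $\mathcal E_{A,j}$.

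The main obstacle I expect is the volume-contraction step. The pullback estimate must beat the band width uniformly along orbits, and this is precisely where the exponent $\kappa_0$, including the $1/(q-2)$ term, must emerge. To get it, I would first try to compute the Jacobian of $\Phi$ explicitly in ratio coordinates, as a rank-one perturbation of a diagonal matrix driven by $t_k$, and bound its determinant by $\prod_i|1-t\lambda_i|^{j}$ up to constants.
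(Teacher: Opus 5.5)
There is a genuine gap, and it sits at the step you flag as the main obstacle.

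\textbf{The volume-contraction claim is false.} In log-ratio coordinates the inverse branches do \emph{not} contract volume by a gap-dependent factor. Fix a reference $r$ and set $z_i=\log(a_i/a_r)^2$ and $B(z)=1+\sum_{l\ne r}e^{z_l}$. Let $\mu=\sum_i\beta_ia_i^2/\sum_ia_i^2$ with $\beta_i=(\lambda_i-\lambda_{\min})/(\lambda_{\max}-\lambda_{\min})$.
\begin{itemize}
\item The cycle map is $(T_rz)_i=z_i+2j\log|\mu-\beta_i|-2j\log|\mu-\beta_r|$, and $DT_r=I+uv^\top$ with $u_i=2j(\beta_i-\beta_r)/((\mu-\beta_i)(\mu-\beta_r))$ and $v_l=e^{z_l}(\beta_l-\mu)/B(z)$.
\item The identity $B(z)(\mu-\beta_r)=\sum_{l\ne r}(\beta_l-\beta_r)e^{z_l}$ gives $v^\top u\equiv-2j$, so $\det DT_r\equiv1-2j$ everywhere, whatever the gap.
\item Changing the reference is a linear change of coordinates with determinant $\pm1$, so choosing $r$ dynamically changes nothing.
\item Gap-dependent factors do appear in other coordinates, but they are change-of-variables artifacts that telescope and are offset by the Lebesgue density there. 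In amplitude coordinates, which is where your fallback $\prod_i|1-t\lambda_i|^j$ lives, $\det=(1-2j)\prod_i|1-t\lambda_i|^j$ exactly. This is tiny in your dominant regime, so inverse images \emph{expand}. In plain ratio coordinates the opposite happens.
\end{itemize}
What your pullback estimate is really missing is the count of preimages. At a preimage of $y$, $\tau=\mu$ solves $(\beta_r-\tau)^{1-2j}+\sum_{i\ne r}e^{y_i}(\beta_i-\tau)^{1-2j}=0$. The left side increases strictly from $-\infty$ to $+\infty$ on each of the $q-1$ intervals between consecutive $\beta_i$'s. So $T_r$ is exactly $(q-1)$-to-one, and $m_p(T_r^{-n}E)=\rho^nm_p(E)$ with $\rho=(q-1)/(2j-1)$. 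This is where $2j>q$ actually enters. Your log-gap heuristic lands on the right first branch of $\kappa_0$, but in the proof that number is just the summability condition $e^{\kappa p}\rho<1$ with $p=q-1$.

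\textbf{The bad sets cannot be thin bands alone.} The set where your amplification hypothesis fails is not a union of thin bands, so the scheme cannot be closed as planned. Near-ties form thick sets, not bands of width $\exp(-\epsilon e^{\kappa k})$. For $q=2$ the map is $z\mapsto(1-2j)z$. Its fixed point $z=0$ (equal components, $\mu=1/2$) lies far from every band, yet $-\log|\mu_k-\beta_r|=\log2$ there for all $k$. The initial points whose orbit is near a tie at time $k$ form a set that is only geometrically small, through the factor $\rho^k$, not doubly exponentially small. Your bands also have infinite measure unless intersected with a box known to contain $z_k$, and forward images of compact sets meeting $\{\mu=\beta_s\}$ are unbounded.

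\textbf{What the paper does instead.} It needs no deterministic lemma. For each fixed $r$ it takes the bad event to be $\{z_0\in[-R,R]^p:S_n^{[r]}\le Me^{\kappa n}\}$, where $S_n^{[r]}=\sum_{k<n}-\log|\mu_k-\beta_r|$. This event confines the orbit by itself.
\begin{itemize}
\item Since $|\mu-\beta_i|\le1$, one gets $z_{i,k}\le U_n:=R+2jMe^{\kappa n}$.
\item One also gets $z_{i,k}\ge-V_n$ with $V_n=O(ne^{\kappa n})$, up to the first visit to a band $\{|B(z)(\mu-\beta_i)|<e^{-n}\}$. Since $B(z)(\mu-\beta_i)$ is an exponential sum with nonzero constant term $\beta_r-\beta_i$, each band has measure $O(e^{-n}L^{p-1})$ inside a box of side $L$.
\item Hence the event lies in $T_r^{-n}\mathcal B_n\cup\bigcup_{k<n}T_r^{-k}\mathcal C_n$, where $\mathcal B_n=[-V_n,U_n]^p$ and $\mathcal C_n$ is the part of the bands inside it.
\item Its measure is $O\bigl(n^p(e^{\kappa p}\rho)^n+n^{p-1}e^{(\kappa(p-1)-1)n}\bigr)$. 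Slow orbits, near-ties included, are absorbed by the $\rho^n$ factor.
\end{itemize}
The two summability conditions are exactly the two branches of $\kappa_0$. In particular, $1/(q-2)$ comes from the band width $e^{-n}$ set against the box side in $p-1$ directions, not from reordering non-dominant components. Borel--Cantelli then gives $S_n^{[r]}/e^{\kappa n}\to\infty$ for every $r$. Finally, $a_r^{(n)}=a_r^{(0)}\exp(-jS_n^{[r]}+O(n))$ yields the decay of each component directly.
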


\section{Spectral projections, normalization, and arbitrary-reference ratio coordinates}
\subsection{Spectral projections and the radial dynamics}
Assume throughout that $q\ge2$ and $2j>q$.
Let the distinct eigenvalues of $A$ be
\[
 \lambda_1>\lambda_2>\cdots>\lambda_q>0,
\]
with associated orthogonal spectral projections $\mathcal P_1,\dots,\mathcal P_q$, and
let $d_i$ be the dimension of the $i$-th eigenspace, so that $\sum_{i=1}^q d_i=d$.
Define the gradient norm in the $i$-th eigenspace by
\[
 v_{i,k}=\norm{\mathcal P_i g_k},\qquad
 \norm{g_k}^2=\sum_{i=1}^q v_{i,k}^2.
\]
When the eigenvalues are simple and $q=d$, $v_{i,k}=|g_k^{(i)}|$.

Since $g_{k+1}=(I-t_kA)^j g_k$ and the spectral projections are orthogonal,
\[
 \mathcal P_i g_{k+1}=\mathcal P_i (I-t_kA)^j g_k
 =(1-t_k\lambda_i)^j \mathcal P_i g_k.
\]
Taking norms gives
\begin{equation}
 v_{i,k+1}=|1-t_k\lambda_i|^j v_{i,k}.
 \label{eq:radial-dynamics}
\end{equation}
The step size is
\[
 t_k=\frac{\sum_i v_{i,k}^2}{\sum_i\lambda_i v_{i,k}^2}.
\]
We first consider initial points with all $v_i>0$; points with zero projections and singular
backward orbits are handled at the end of the proof.

\subsection{Normalization and the scalar $\mu$}
Set
\[
 \Delta=\lambda_1-\lambda_q>0,\qquad
 \beta_i=\frac{\lambda_i-\lambda_q}{\Delta}\in[0,1],\qquad
 c=\frac{\lambda_q}{\Delta}>0.
\]
Then
\[
 1=\beta_1>\beta_2>\cdots>\beta_q=0.
\]
Define the normalized Rayleigh quotient
\begin{equation}
 \mu=\frac{\sum_{i=1}^q \beta_i v_i^2}{\sum_{i=1}^q v_i^2}.
 \label{eq:mu-def}
\end{equation}
Since all $v_i^2>0$ and at least one $\beta_i>0$ (e.g.\ $\beta_1=1$), the numerator is
positive and hence $\mu>0$.
Since $\beta_i\le1$ and at least one $\beta_i<1$ (e.g.\ $\beta_q=0$), the numerator is
strictly smaller than the denominator and hence $\mu<1$.
Thus
\[
 0<\mu<1.
\]
From the expression for $t_k$,
\begin{equation}
 t=\frac{\sum_i v_i^2}{\sum_i\lambda_i v_i^2}
 =\frac{\sum_i v_i^2}{\lambda_q\sum_i v_i^2+\Delta\sum_i\beta_i v_i^2}
 =\frac{1}{\lambda_q+\Delta\mu},
 \label{eq:t-mu}
\end{equation}
\begin{equation}
 1-t\lambda_i
 =\frac{\lambda_q+\Delta\mu-\lambda_i}{\lambda_q+\Delta\mu}
 =\frac{\Delta(\mu-\beta_i)}{\lambda_q+\Delta\mu}
 =\frac{\mu-\beta_i}{c+\mu}.
 \label{eq:factor-mu}
\end{equation}
Hence
\begin{equation}
 |1-t\lambda_i|=\frac{|\mu-\beta_i|}{c+\mu}.
 \label{eq:abs-factor}
\end{equation}
For completeness, note that $\beta_i=(\lambda_i-\lambda_q)/\Delta$ gives
$\lambda_i=\lambda_q+\Delta\beta_i$, and therefore
\[
 \lambda_q+\Delta\mu-\lambda_i=\Delta(\mu-\beta_i).
\]
Since $c=\lambda_q/\Delta$,
\[
 \frac{\Delta(\mu-\beta_i)}{\lambda_q+\Delta\mu}
 =\frac{\mu-\beta_i}{\lambda_q/\Delta+\mu}
 =\frac{\mu-\beta_i}{c+\mu},
\]
which completes the verification of \eqref{eq:factor-mu}.

\subsection{Ratio coordinates with an arbitrary reference $r$}
Fix an arbitrary reference index $r\in\{1,\dots,q\}$, set
\[
 I_r=\{1,\dots,q\}\setminus\{r\},\qquad p=q-1,
\]
and define
\[
 R_i=\left(\frac{v_i}{v_r}\right)^2,\qquad
 z_i=\log R_i\quad(i\in I_r).
\]
In these coordinates,
\begin{equation}
 B(z)=1+\sum_{i\in I_r}e^{z_i},\qquad
 \mu(z)=\frac{\beta_r+\sum_{i\in I_r}\beta_i e^{z_i}}{B(z)}.
 \label{eq:mu-chart}
\end{equation}
Indeed, dividing numerator and denominator of \eqref{eq:mu-def} by $v_r^2$ gives
\[
 \mu=\frac{\beta_r+\sum_{i\ne r}\beta_i R_i}{1+\sum_{i\ne r}R_i}.
\]
Each ratio updates as
\[
 R_i'
 =\left(\frac{v_i'}{v_r'}\right)^2
 =\left(\frac{|1-t\lambda_i|^j v_i}{|1-t\lambda_r|^j v_r}\right)^2
 =R_i\left|\frac{1-t\lambda_i}{1-t\lambda_r}\right|^{2j}
 =R_i\left|\frac{\mu-\beta_i}{\mu-\beta_r}\right|^{2j}.
\]
The last step uses \eqref{eq:factor-mu}:
\[
 \frac{1-t\lambda_i}{1-t\lambda_r}
 =\frac{(\mu-\beta_i)/(c+\mu)}{(\mu-\beta_r)/(c+\mu)}
 =\frac{\mu-\beta_i}{\mu-\beta_r}.
\]
Taking logarithms,
\begin{equation}
 (T_r(z))_i
 =z_i+2j\log\frac{|\mu-\beta_i|}{|\mu-\beta_r|}
 =z_i+2j\log|\mu-\beta_i|-2j\log|\mu-\beta_r|.
 \label{eq:arbitrary-chart}
\end{equation}
More explicitly, since $z_i=\log R_i$ and $z_i'=\log R_i'$,
\[
 z_i'=\log R_i'
 =\log R_i+2j\log\left|\frac{\mu-\beta_i}{\mu-\beta_r}\right|
 =z_i+2j\log|\mu-\beta_i|-2j\log|\mu-\beta_r|.
\]
When $r$ is an interior index, $\mu-\beta_r$ may change sign, and the version with absolute
values is proved below.

\section{Measure-theoretic preliminaries}

\begin{lemma}[Lebesgue null sets]
\label{lem:lebesgue-zero}
In $\R^p$, a Lebesgue null set has the following properties:
\begin{itemize}
 \item the countable union of null sets is null;
 \item a smooth diffeomorphism maps null sets to null sets;
 \item a smooth hypersurface is a null set.
\end{itemize}
\end{lemma}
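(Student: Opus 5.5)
The three properties are standard facts of Lebesgue measure on $\R^p$. I would prove each one from the definition of outer measure together with the change-of-variables estimate. Throughout, "smooth" will be read as $C^1$, which is all that is needed.

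For the first item I use countable subadditivity of outer measure. If $N_1,N_2,\dots$ are null, then $\lambda\bigl(\bigcup_n N_n\bigr)\le\sum_n\lambda(N_n)=0$. Concretely, given $\varepsilon>0$, cover $N_n$ by boxes of total volume at most $\varepsilon 2^{-n}$; the union of all these covers has total volume at most $\varepsilon$.

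For the second item, let $\Phi:U\to V$ be a $C^1$ diffeomorphism between open subsets of $\R^p$, and let $N\subset U$ be null.

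\begin{itemize}
 \item Exhaust $U$ by countably many compact closed cubes $K_\ell$. By the first item, it suffices to show that each $\Phi(N\cap K_\ell)$ is null.
 \item On a compact convex neighbourhood of $K_\ell$ inside $U$, the derivative is bounded, $\lVert D\Phi\rVert\le L_\ell$. By the mean value inequality, $\Phi$ is $L_\ell$-Lipschitz there.
 \item Cover $N\cap K_\ell$ by small cubes of side $s_i$ with $\sum_i s_i^p<\varepsilon$. Each image $\Phi(Q_i)$ lies in a cube of side $2\sqrt p\,L_\ell s_i$.
 \item Hence the image is covered by cubes of total volume at most $(2\sqrt p L_\ell)^p\varepsilon$. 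Letting $\varepsilon\to0$ gives the claim.
\end{itemize}

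The only care needed is that cubes near $\partial K_\ell$ must stay inside the convex neighbourhood. This is ensured by taking the side lengths small. The same argument shows that any locally Lipschitz map $\R^p\to\R^p$ sends null sets to null sets, which may be convenient later in the paper.

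For the third item, let $S$ be a $C^1$ embedded hypersurface. By definition, each point of $S$ has a neighbourhood $W$ and a diffeomorphism $\psi:W\to\psi(W)\subset\R^p$ with $\psi(S\cap W)\subset\R^{p-1}\times\{0\}$. Since $\R^p$ is second countable, countably many such $W_n$ cover $S$.

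The hyperplane $\R^{p-1}\times\{0\}$ is null. Indeed, for each $R$, the slab $[-R,R]^{p-1}\times[-\delta,\delta]$ has volume $(2R)^{p-1}2\delta$, which tends to $0$ as $\delta\to0$; then take the union over $R\in\mathbb N$. Therefore $S\cap W_n=\psi^{-1}\bigl(\psi(S\cap W_n)\bigr)$ is null by the second item, and $S$ is null by the first.

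An equivalent route uses the graph description: locally $S$ is the graph of a $C^1$ function $x_i=\phi(x_{\hat i})$, and Fubini's theorem shows that the graph has measure zero.

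There is no genuine obstacle here. The only step that requires some thought is the localization in the second item: one must pass to compact convex pieces so that a uniform Lipschitz bound holds. This is handled by the countable exhaustion combined with the first item.
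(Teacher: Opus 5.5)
Your proof is correct and is the standard textbook argument: countable subadditivity for unions, a Lipschitz covering estimate on compact convex pieces for $C^1$ diffeomorphisms, and slice charts plus second countability (or the graph/Fubini argument) for hypersurfaces. The paper does not prove this lemma at all but quotes it as background, so there is no competing route to compare. Your version covers every later use of it, including the level sets $\{\mu=\beta_s\}$ in Lemma~\ref{lem:volume}; there $\mu$ is strictly monotone in $z_1$ (or in $z_q$ when $r=1$), so your Fubini graph argument even applies globally without charts.
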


\begin{lemma}[Inverse function theorem]
\label{lem:inverse-function}
Let $U\subset\R^p$ be open and $F:U\to\R^p$ a $C^k$ map ($k\ge1$).
If $z_0\in U$ and $\det DF(z_0)\ne0$, there are neighborhoods $V\subset U$ of $z_0$ and
$W$ of $F(z_0)$ such that $F|_V:V\to W$ is a $C^k$ diffeomorphism with
\[
 D(F|_V)^{-1}(F(z))=\bigl(DF(z)\bigr)^{-1},\qquad z\in V.
\]
In particular, if $\det DF(z)\ne0$ throughout $U$, then $F$ is a local $C^k$ diffeomorphism.
\end{lemma}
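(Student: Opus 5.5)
This is the classical inverse function theorem, and I would prove it by the standard contraction-mapping argument. First I reduce to a normalized situation. Replace $F$ by $G(z)=L^{-1}\bigl(F(z_0+z)-F(z_0)\bigr)$ with $L=DF(z_0)$. Then $G(0)=0$ and $DG(0)=I$, and all conclusions for $G$ transfer back to $F$ through affine isomorphisms. Since $DG$ is continuous (here $k\ge1$ is used), I choose $\rho>0$ with $\overline{B_\rho(0)}\subset U-z_0$ and $\|DG(z)-I\|\le\tfrac12$ for $\|z\|\le\rho$. The mean value inequality applied to $z\mapsto G(z)-z$ then gives
\[
 \|(G(z)-z)-(G(w)-w)\|\le\tfrac12\|z-w\|,
\]
and hence $\|G(z)-G(w)\|\ge\tfrac12\|z-w\|$. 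So $G$ is injective on $B_\rho(0)$, with a Lipschitz inverse on its image.

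Next I show that the image contains a ball. For $y$ with $\|y\|<\rho/2$, consider $\Phi_y(z)=z-G(z)+y$. This map sends $\overline{B_\rho(0)}$ into the open ball $B_\rho(0)$, because $\|\Phi_y(z)\|\le\tfrac12\|z\|+\|y\|<\rho$. It is also a $\tfrac12$-contraction. By the Banach fixed point theorem there is a unique $z$ with $G(z)=y$. I set $W'=B_{\rho/2}(0)$ and $V'=G^{-1}(W')\cap B_\rho(0)$. The set $V'$ is open by continuity, and $G|_{V'}:V'\to W'$ is a continuous bijection whose inverse $H$ is $2$-Lipschitz.

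The main step, which I expect to carry the real work, is differentiability of $H$ together with the formula $DH(G(z))=(DG(z))^{-1}$. $DG(z)$ is invertible on $V'$ because $\|DG(z)-I\|\le\tfrac12$ and the Neumann series converges. Write $y=G(z)$ and $y+\eta=G(z+\xi)$. Then
\[
 \eta=DG(z)\xi+o(\|\xi\|).
\]
The Lipschitz bound gives $\|\xi\|\le2\|\eta\|$, so $o(\|\xi\|)=o(\|\eta\|)$. Applying $DG(z)^{-1}$ yields $H(y+\eta)-H(y)=DG(z)^{-1}\eta+o(\|\eta\|)$.

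Then $DH=(DG\circ H)^{-1}$ is a composition of the continuous map $H$, the $C^{k-1}$ map $DG$, and the smooth inversion map on $GL_p$. Induction on the regularity shows $H\in C^k$. Undoing the normalization gives the neighborhoods $V$ and $W$ together with the stated formula for $D(F|_V)^{-1}$.

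The final assertion is immediate. If $\det DF\ne0$ on all of $U$, the construction applies at every $z_0\in U$, so $F$ is a local $C^k$ diffeomorphism.
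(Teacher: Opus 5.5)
Your proof is correct and complete. It is the standard contraction-mapping proof: you normalize to $DG(0)=I$, get the lower Lipschitz bound $\|G(z)-G(w)\|\ge\tfrac12\|z-w\|$ from the mean value inequality, use Banach's fixed point theorem for surjectivity onto $B_{\rho/2}(0)$, show differentiability via $o(\|\xi\|)=o(\|\eta\|)$, and bootstrap regularity through $DH=(DG\circ H)^{-1}$.

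The paper does not prove this lemma at all; it quotes it as a classical preliminary, so there is no competing route to compare. Your argument supplies what the paper takes from the literature. That includes the derivative formula, which the paper later relies on to assign each inverse branch of $T_r$ the Jacobian $1/(2j-1)$ in Lemma~\ref{lem:volume}.
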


\begin{lemma}[Borel--Cantelli lemma]
\label{lem:bc}
If $\{E_n\}_{n=1}^\infty$ is a sequence of Lebesgue measurable sets in $\R^p$ with
\[
 \sum_{n=1}^\infty m_p(E_n)<\infty,
\]
then almost every point belongs to only finitely many $E_n$.
That is, outside a null set, for each $z_0$ there is an integer $\mathcal N$ such that
$z_0\notin E_n$ whenever $n>\mathcal N$.
\end{lemma}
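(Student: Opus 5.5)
The statement immediately above is the Borel--Cantelli lemma for Lebesgue measure on $\R^p$. My plan is the standard one: express the exceptional set as a limsup set and bound its measure by tails of the convergent series. Set
\[
 E^*=\limsup_{n\to\infty}E_n=\bigcap_{N=1}^\infty\bigcup_{n>N}E_n .
\]
A point $z_0$ lies in infinitely many $E_n$ exactly when $z_0\in E^*$. So it suffices to show $m_p(E^*)=0$. Then for every $z_0\notin E^*$ there is an $\mathcal N$ with $z_0\notin\bigcup_{n>\mathcal N}E_n$, which is the second formulation in the statement.

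The steps, in order, are as follows.

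\begin{itemize}
 \item \emph{Measurability.} $E^*$ is Lebesgue measurable, because it is built from the measurable $E_n$ by countable unions and intersections.
 \item \emph{Monotonicity.} For every $N$ we have $E^*\subset\bigcup_{n>N}E_n$.
 \item \emph{Subadditivity.} Countable subadditivity of $m_p$ then gives
\[
 m_p(E^*)\le\sum_{n>N}m_p(E_n).
\]
 \item \emph{Tails.} Since $\sum_n m_p(E_n)<\infty$, the tail sums on the right tend to $0$ as $N\to\infty$. Hence $m_p(E^*)=0$.
\end{itemize}

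None of these steps presents a real obstacle. The only care needed is at the subadditivity step. $m_p$ may be infinite on individual sets in general, but here each $m_p(E_n)$ is finite because the series converges, so the tail argument is legitimate. No finiteness of the ambient space is required: we never use continuity from above on $\R^p$, only the monotonicity and subadditivity bound above.
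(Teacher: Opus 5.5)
Your proof is correct: it is the standard argument for the first Borel--Cantelli lemma, bounding $m_p\bigl(\bigcap_N\bigcup_{n>N}E_n\bigr)\le\sum_{n>N}m_p(E_n)\to0$. The paper states this lemma without proof as a standard measure-theoretic fact, so there is no competing route to compare against.
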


\begin{lemma}[Fubini theorem]
\label{lem:fubini}
Let $(X,\mathcal A,\mu)$ and $(Y,\mathcal B,\nu)$ be $\sigma$-finite measure spaces.
For a nonnegative measurable $f:X\times Y\to[0,\infty]$,
\[
 \int_{X\times Y} f\,d(\mu\times\nu)
 =\int_X\left(\int_Y f\,d\nu\right)d\mu
 =\int_Y\left(\int_X f\,d\mu\right)d\nu.
\]
If $f$ is real-valued and $\int_{X\times Y}|f|\,d(\mu\times\nu)<\infty$, both iterated
integrals exist and agree.
\end{lemma}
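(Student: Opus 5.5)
This is the classical Fubini--Tonelli theorem, and the paper cites it as a standard tool. The plan is the textbook proof: establish the nonnegative (Tonelli) statement on indicators of measurable sets, extend by linearity and monotone convergence, and obtain the integrable statement by splitting into positive and negative parts. Throughout, $\mu\times\nu$ denotes the product measure on $\mathcal A\otimes\mathcal B$. It is the unique measure with $(\mu\times\nu)(A\times B)=\mu(A)\nu(B)$ on measurable rectangles. Uniqueness is where $\sigma$-finiteness is first used, through the $\pi$--$\lambda$ theorem, since rectangles form a $\pi$-system generating $\mathcal A\otimes\mathcal B$.

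\emph{Indicator functions.} Fix exhausting sequences $X_n\uparrow X$, $Y_n\uparrow Y$ with $\mu(X_n),\nu(Y_n)<\infty$. Let $\mathcal M$ be the class of sets $E\in\mathcal A\otimes\mathcal B$ with the following two properties for every $n$. First, $x\mapsto\nu(E_x\cap Y_n)$ and $y\mapsto\mu(E^y\cap X_n)$ are measurable. Second, both iterated integrals of $\mathbf 1_{E\cap(X_n\times Y_n)}$ equal $(\mu\times\nu)(E\cap(X_n\times Y_n))$.

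Rectangles lie in $\mathcal M$ by direct computation. Closure under increasing unions follows from monotone convergence. Closure under proper differences follows by subtraction, which is legitimate because all quantities are finite on $X_n\times Y_n$. Hence $\mathcal M$ is a $\lambda$-system containing a generating $\pi$-system, so $\mathcal M=\mathcal A\otimes\mathcal B$. Letting $n\to\infty$ with monotone convergence removes the truncation.

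\emph{Nonnegative functions.} Linearity gives the identity for nonnegative simple functions. For a general measurable $f\ge0$, take simple $0\le s_k\uparrow f$. Monotone convergence, applied three times (inner integral, outer integral, product integral), yields measurability of the sections' integrals and the equality of all three integrals in $[0,\infty]$.

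\emph{Integrable functions.} If $\int|f|\,d(\mu\times\nu)<\infty$, the Tonelli part applied to $|f|$ shows $\int_Y|f(x,\cdot)|\,d\nu<\infty$ for $\mu$-a.e.\ $x$, and symmetrically. Apply the Tonelli part to $f^+$ and $f^-$ separately and subtract, defining the inner integral as $0$ on the exceptional null set. Both iterated integrals then exist and agree with the double integral.

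The only delicate step is the monotone-class argument. There one must keep the truncation to finite-measure pieces, because closure under differences fails when infinite measures appear. That is exactly where $\sigma$-finiteness enters; everything else is routine limit interchange.

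In the paper's applications $X$ and $Y$ are Euclidean spaces with Lebesgue measure, which is $\sigma$-finite. So the lemma can simply be quoted from any standard measure-theory text rather than reproved.
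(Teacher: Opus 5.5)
Your outline is the standard textbook proof of Fubini--Tonelli and it is correct. The one point left implicit is that the sections $E_x$, $E^y$ of sets $E\in\mathcal A\otimes\mathcal B$ are measurable; this can be built into the definition of your class $\mathcal M$, since it holds for rectangles and is preserved under increasing unions and proper differences. The paper gives no proof of its own and records the theorem as a standard measure-theoretic preliminary, consistent with your closing remark that it can be quoted rather than reproved.
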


\begin{lemma}[Fubini null-set lemma]
\label{lem:fubini-null}
If $Z\subset\R^p$ is a Lebesgue null set and $E\subset\R$ is Lebesgue measurable, then
$Z\times E$ is a Lebesgue null set in $\R^{p+1}$.
\end{lemma}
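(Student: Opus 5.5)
The plan is to apply the Fubini theorem (Lemma~\ref{lem:fubini}) to the indicator function of $Z\times E$, after first reducing to the case where $Z$ is Borel.

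\emph{Reduction to a Borel null set.} Lebesgue measure is outer regular, so for each $n$ there is an open set $U_n\supset Z$ with $m_p(U_n)<1/n$. Set $G=\bigcap_n U_n$. Then $G$ is a $G_\delta$ (hence Borel) set, $Z\subset G$, and $m_p(G)=0$. Since $Z\times E\subset G\times E$ and Lebesgue measure on $\R^{p+1}$ is complete, it suffices to show that $G\times E$ is null.

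\emph{Handling the measurability of $E$.} The set $E$ is Lebesgue measurable, so it differs from a Borel set by a null set. To avoid dealing with that, I would enlarge $E$ to all of $\R$, which is harmless because $G\times E\subset G\times\R$.

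\emph{Fubini computation.} The set $G\times\R$ is Borel in $\R^{p+1}$. Its indicator $f=\mathbf 1_{G\times\R}$ is a nonnegative measurable function on the product of the $\sigma$-finite spaces $(\R^p,m_p)$ and $(\R,m_1)$. By Lemma~\ref{lem:fubini}, integrating first over $x\in\R^p$ gives
\[
 m_{p+1}(G\times\R)=\int_{\R}\Bigl(\int_{\R^p}\mathbf 1_G(x)\,dm_p(x)\Bigr)\,dm_1(y)=\int_{\R}m_p(G)\,dm_1(y)=\int_\R 0\,dm_1=0 .
\]
Here $m_{p+1}$, the Lebesgue measure on $\R^{p+1}$, agrees with the product measure on Borel sets.

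\emph{Conclusion.} By monotonicity and completeness, $Z\times E\subset G\times\R$ is Lebesgue measurable with measure zero, which is the claim.

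If one prefers to avoid Fubini altogether, there is a direct covering argument. Write $\R=\bigcup_{N\ge1}[-N,N]$. For fixed $N$, cover $Z$ by countably many boxes of total $p$-volume less than $\varepsilon/(2N)$. Multiplying each box by $[-N,N]$ gives boxes covering $Z\times[-N,N]$ of total volume less than $\varepsilon$. Hence each $Z\times[-N,N]$ is null, and the countable union is null by the first item of Lemma~\ref{lem:lebesgue-zero}.

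The only real subtlety is measurability: $Z$ is merely Lebesgue null and not necessarily Borel, so Fubini cannot be applied to $Z\times E$ directly. The Borel hull $G$, together with the completeness of Lebesgue measure, handles this.
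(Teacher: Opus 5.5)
Your proof is correct and is essentially the paper's argument. Both apply Lemma~\ref{lem:fubini} to an indicator whose $x$-integral is $m_p(\cdot)=0$; you add the Borel hull $G$, the enlargement of $E$ to $\R$, and completeness to handle measurability, and your covering alternative also shows that measurability of $E$ is never needed.

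One small correction to your closing remark: Fubini \emph{can} be applied to $Z\times E$ directly, as the paper does. A null set is Lebesgue measurable, so $Z\times E$ is a measurable rectangle for $(\R^p,\mathcal L_p,m_p)\times(\R,\mathcal L_1,m_1)$, with $\mathcal L_p,\mathcal L_1$ the Lebesgue $\sigma$-algebras. What that route tacitly uses is that $m_{p+1}$ is the completion of this product measure, and your Borel reduction makes that identification explicit by needing it only on Borel sets.
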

\begin{proof}
Take $X=\R^p$, $Y=\R$, $\mu=m_p$, $\nu=m_1$, and the indicator
$f=\mathbf 1_{Z\times E}:\R^{p+1}\to[0,1]$. By the nonnegative case of Fubini's theorem
(Lemma~\ref{lem:fubini}),
\[
 m_{p+1}(Z\times E)
 =\int_{\R}\left(\int_{\R^p}\mathbf 1_Z(x)\mathbf 1_E(t)\,dx\right)dt.
\]
For each fixed $t\in E$, the inner integral is
\[
 \int_{\R^p}\mathbf 1_Z(x)\,dx=m_p(Z)=0;
\]
for $t\notin E$ the inner integral is also zero. Hence
\[
 m_{p+1}(Z\times E)=\int_E 0\,dt=0.
\]
This completes the proof.
\end{proof}

\section{Inverse-image volume formula for an arbitrary reference}
Define the singular set
\[
 \Sigma_r=\bigcup_{s=2}^{q-1}\{z:\mu(z)=\beta_s\},
 \qquad \Omega_r=\R^p\setminus\Sigma_r.
\]
This union is empty when $q=2$. Since all spectral weights are positive, $\mu$ never
equals $0$ or $1$, and $\Omega_r$ is precisely the region in which all component update
factors are nonzero.

\begin{lemma}[Volume formula independent of the reference component]
\label{lem:volume}
On $\Omega_r$, $T_r$ is a smooth local diffeomorphism and
\[
 \det DT_r=1-2j.
\]
Every target $y\in\R^p$ has exactly $q-1$ preimages in $\Omega_r$, and the corresponding
values of $\mu$ lie in
\[
 \beta_{s+1}<\mu<\beta_s,\qquad s=1,\dots,q-1.
\]
Writing $m_p$ for $p$-dimensional Lebesgue measure, for every measurable $E\subset\R^p$,
\begin{equation}
 m_p(T_r^{-n}E)=\rho^n m_p(E),\qquad
 \rho=\frac{q-1}{2j-1}<1.
 \label{eq:inverse-volume}
\end{equation}
Here $T_r^{-n}E$ is understood as the union over the $q-1$ inverse branches, and
$\Sigma_r$, being a finite union of smooth level sets, is closed and measurable; the
backward images below are taken along each well-defined inverse branch.
The set
\[
 \mathcal N_r=\Sigma_r\cup\bigcup_{n\ge1}(T_r|_{\Omega_r})^{-n}\Sigma_r
\]
of initial points that hit the singular set in finite steps is a countable union of null
sets and hence is null.
\end{lemma}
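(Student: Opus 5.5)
The plan is to treat the three assertions in turn: the Jacobian is a rank-one perturbation of the identity, the preimage count is a one-variable monotonicity argument in $\mu$, and the volume formula follows from change of variables on each inverse branch.

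\textbf{Step 1: Jacobian.} On $\Omega_r$ every $|\mu-\beta_i|$ is nonzero, so \eqref{eq:arbitrary-chart} shows that $T_r$ is smooth there. Write $(T_r)_i=z_i+2j\,\varphi_i(\mu(z))$ with $\varphi_i(\mu)=\log|\mu-\beta_i|-\log|\mu-\beta_r|$. Then
\[
DT_r=I+2j\,\varphi'(\mu)\,\nabla\mu^\top ,
\]
a rank-one update, so by the matrix determinant lemma $\det DT_r=1+2j\,\nabla\mu\cdot\varphi'(\mu)$.

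Introduce the weights $w_i=e^{z_i}/B$ for $i\ne r$ and $w_r=1/B$. They sum to $1$ and satisfy $\sum_{\text{all } i} w_i\beta_i=\mu$. From \eqref{eq:mu-chart},
\[
\partial\mu/\partial z_i=w_i(\beta_i-\mu),\qquad \varphi_i'(\mu)=\frac{1}{\mu-\beta_i}-\frac{1}{\mu-\beta_r}.
\]
Summing over $i\in I_r$ gives
\[
\sum_{i\ne r} w_i(\beta_i-\mu)\,\varphi_i'(\mu)=-(1-w_r)+\frac{\sum_{i\ne r}w_i(\mu-\beta_i)}{\mu-\beta_r}.
\]
Since $\sum_{\text{all } i}w_i(\mu-\beta_i)=0$, the numerator of the last fraction equals $-w_r(\mu-\beta_r)$. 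The whole sum is therefore $-1$, and $\det DT_r=1-2j\ne0$. Lemma~\ref{lem:inverse-function} then makes $T_r$ a local diffeomorphism on $\Omega_r$.

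\textbf{Step 2: Exactly $q-1$ preimages.} Fix $y\in\R^p$ and set $a_i=e^{y_i}$ for $i\ne r$ and $a_r=1$. Suppose $T_r(z)=y$ and $\mu=\mu(z)$. Inverting \eqref{eq:arbitrary-chart} gives
\[
R_i=a_i\,\frac{|\mu-\beta_r|^{2j}}{|\mu-\beta_i|^{2j}} .
\]
Consistency with $\mu=\mu(z)$ means $\sum_i(\beta_i-\mu)R_i=0$, where $R_r=1$. After dividing by $|\mu-\beta_r|^{2j}$ this becomes
\[
F(\mu):=\sum_{i=1}^q a_i\,\operatorname{sgn}(\beta_i-\mu)\,|\beta_i-\mu|^{1-2j}=0 .
\]
Conversely, each root $\mu\in(0,1)\setminus\{\beta_s\}$ of $F$ defines a $z$ through the formula for $R_i$ above, and this $z$ satisfies $\mu(z)=\mu$ and $T_r(z)=y$. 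This gives a bijection between preimages and roots.

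Outside $[0,1]$ all terms of $F$ have the same sign, so there are no roots there. On each interval $(\beta_{s+1},\beta_s)$ the function $F$ tends to $-\infty$ at the left end and to $+\infty$ at the right end. It is also strictly increasing, because $x\mapsto\operatorname{sgn}(x)|x|^{1-2j}$ is decreasing on each side of $0$ with derivative $(1-2j)|x|^{-2j}$. Hence each of the $q-1$ intervals contains exactly one root, and $T_r(z)=y$ has exactly $q-1$ solutions in $\Omega_r$, one with $\beta_{s+1}<\mu<\beta_s$ for each $s=1,\dots,q-1$.

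Thus $T_r$ restricted to $U_s=\{\beta_{s+1}<\mu<\beta_s\}$ is a bijection onto $\R^p$. By Step 1 and Lemma~\ref{lem:inverse-function} its inverse $S_s$ is smooth. Alternatively, smoothness of $S_s$ follows from the implicit function theorem applied to the simple root of $F$, since $F'>0$.

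\textbf{Step 3: Volume and null sets.} The change-of-variables formula for the diffeomorphism $S_s:\R^p\to U_s$, whose Jacobian determinant has constant absolute value $|1-2j|^{-1}$, gives $m_p(S_sE)=m_p(E)/(2j-1)$. Summing over the $q-1$ disjoint branches gives
\[
m_p(T_r^{-1}E)=\rho\, m_p(E),\qquad \rho=\frac{q-1}{2j-1}.
\]
Induction on $n$ yields \eqref{eq:inverse-volume}. Since $2j-1\ge q>q-1$, we get $\rho<1$.

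For the null set, each level set $\{\mu=\beta_s\}$ with $2\le s\le q-1$ is a smooth hypersurface. Indeed, $\nabla\mu=(w_i(\beta_i-\mu))_{i\ne r}$ cannot vanish there: the $\beta_i$ are distinct, so for $q\ge3$ some $i\ne r$ has $\beta_i\ne\beta_s$. By Lemma~\ref{lem:lebesgue-zero} each such level set is null, and so is $\Sigma_r$, which is also closed. By \eqref{eq:inverse-volume} every $T_r^{-n}\Sigma_r$ is null, and a countable union gives that $\mathcal N_r$ is null.

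\textbf{Main obstacle.} I expect the main difficulty to be the global statement in Step 2: that each branch is a bijection onto all of $\R^p$, not merely a local diffeomorphism. This is exactly what the strict monotonicity of $F$ provides, and it requires correct bookkeeping of the absolute values and signs when $r$ is an interior index. The determinant identity, by contrast, reduces to the one-line cancellation $\sum_{\text{all } i} w_i(\mu-\beta_i)=0$.
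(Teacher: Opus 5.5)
Your proposal is correct and follows essentially the same route as the paper: the matrix determinant lemma for the rank-one Jacobian, where your identity $\sum_i w_i(\mu-\beta_i)=0$ is the paper's $B(z)(\mu-\beta_r)=\sum_{l\in I_r}(\beta_l-\beta_r)e^{z_l}$ in normalized form; the same scalar equation (your $F$ is exactly the paper's $G_y$) with monotonicity and endpoint limits on each of the $q-1$ intervals; and change of variables summed over the branches. You are somewhat more explicit than the paper in checking that each root really gives a preimage and that each $T_r|_{U_s}$ is a global diffeomorphism onto $\R^p$, which is what the change-of-variables step needs.
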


\begin{proof}
\textbf{Step 1: $\Sigma_r$ is null.}
For $l\in I_r$, from \eqref{eq:mu-chart},
\[
 \mu = \frac{\beta_r+\sum_{i\in I_r}\beta_i e^{z_i}}{1+\sum_{i\in I_r}e^{z_i}}.
\]
Differentiating with respect to $z_l$,
\[
 \frac{\partial \mu}{\partial z_l}
 =\frac{e^{z_l}(\beta_l-\mu)}{B(z)}.
\]
Since $0<\mu<1$, we have $1-\mu>0$ and $-\mu<0$.
If $r\ne1$, then $1\in I_r$; taking $l=1$,
\[
 \frac{\partial \mu}{\partial z_1}
 =\frac{e^{z_1}(1-\mu)}{B(z)}>0.
\]
If $r=1$, then $q\in I_r$; taking $l=q$,
\[
 \frac{\partial \mu}{\partial z_q}
 =\frac{e^{z_q}(0-\mu)}{B(z)}<0.
\]
Thus $\nabla\mu\ne0$.
Each singular level set $\{\mu=\beta_s\}$ is a smooth hypersurface, and a finite union
of smooth hypersurfaces is null; hence $m_p(\Sigma_r)=0$.

\textbf{Step 2: Jacobian determinant.}
Let
\[
 f_i(\mu)=2j\log\frac{|\mu-\beta_i|}{|\mu-\beta_r|},\qquad i\in I_r.
\]
Where $\mu\ne\beta_i,\beta_r$,
\[
 f_i'(\mu)
 =\frac{2j(\beta_i-\beta_r)}{(\mu-\beta_i)(\mu-\beta_r)}.
\]
Then
\[
 DT_r=I+uv^\top,
\]
where
\[
 u_i=f_i'(\mu),\qquad v_l=\mu_{z_l}.
\]
By the matrix determinant lemma,
\[
 \det(I+uv^\top)=1+v^\top u.
\]
We now compute $v^\top u$. Differentiating \eqref{eq:mu-chart} with respect to $z_l$,
\[
 v_l=\frac{\partial \mu}{\partial z_l}
 =\frac{e^{z_l}(\beta_l-\mu)}{B(z)}.
\]
Therefore
\begin{align*}
 v^\top u
 &=\sum_{l\in I_r} f_l'(\mu)\,\frac{\partial\mu}{\partial z_l}\\
 &=\sum_{l\in I_r}
 \frac{2j(\beta_l-\beta_r)}{(\mu-\beta_l)(\mu-\beta_r)}
 \cdot\frac{e^{z_l}(\beta_l-\mu)}{B(z)}.
\end{align*}
For each $l\in I_r$, since $\beta_l-\mu=-(\mu-\beta_l)$,
\[
 \frac{\beta_l-\mu}{\mu-\beta_l}=-1.
\]
Substituting,
\[
 v^\top u
 =-\frac{2j}{(\mu-\beta_r)B(z)}
 \sum_{l\in I_r}(\beta_l-\beta_r)e^{z_l}.
\]
From \eqref{eq:mu-chart},
\[
 B(z)(\mu-\beta_r)
 =\sum_{l\in I_r}(\beta_l-\beta_r)e^{z_l}.
\]
Substituting this into the previous expression,
\[
 v^\top u
 =-\frac{2j}{(\mu-\beta_r)B(z)}\cdot B(z)(\mu-\beta_r)
 =-2j.
\]
Hence
\[
 \det DT_r=1+v^\top u=1-2j,
\]
and therefore $|\det DT_r|=2j-1\ne0$.
Since $\mu(z)$ is $C^\infty$ on $\Omega_r$ and $\mu(z)\ne\beta_i,\beta_r$ for
$i\in I_r$, the map $T_r$ is $C^\infty$ on $\Omega_r$. By the inverse function theorem
(Lemma~\ref{lem:inverse-function}), $T_r$ is a smooth local diffeomorphism on $\Omega_r$.

\textbf{Step 3: Number of preimages and inverse branches.}
Given a target point $y=(y_i)_{i\in I_r}$, let $\tau=\mu(z)$ be the scalar value at a
preimage. Then
\[
 e^{y_i}=e^{z_i}\left|\frac{\tau-\beta_i}{\tau-\beta_r}\right|^{2j}
 =R_i\left|\frac{\tau-\beta_i}{\tau-\beta_r}\right|^{2j}.
\]
Since $2j$ is even and $\tau\ne\beta_i,\beta_r$,
\[
 \left|\frac{\tau-\beta_i}{\tau-\beta_r}\right|^{2j}
 =\frac{(\tau-\beta_i)^{2j}}{(\tau-\beta_r)^{2j}},
\]
and hence
\[
 R_i=e^{y_i}\left|\frac{\tau-\beta_r}{\tau-\beta_i}\right|^{2j}
 =e^{y_i}\frac{(\tau-\beta_r)^{2j}}{(\tau-\beta_i)^{2j}}.
\]
Substituting into the definition $\tau=\mu(z)$,
\[
 \tau=\frac{\beta_r+\sum_{i\in I_r}\beta_i R_i}{1+\sum_{i\in I_r}R_i},
\]
so that
\[
 \tau\Bigl(1+\sum_{i\in I_r}R_i\Bigr)=\beta_r+\sum_{i\in I_r}\beta_i R_i,
\]
or
\[
 (\tau-\beta_r)+\sum_{i\in I_r}(\tau-\beta_i)R_i=0.
\]
Substituting the expression for $R_i$,
\[
 (\tau-\beta_r)+\sum_{i\in I_r}(\tau-\beta_i)
 e^{y_i}\frac{(\tau-\beta_r)^{2j}}{(\tau-\beta_i)^{2j}}=0.
\]
Factoring out $(\tau-\beta_r)$ and using $\tau\ne\beta_r$,
\[
 (\tau-\beta_r)\left[
 1+\sum_{i\in I_r}e^{y_i}
 \frac{(\tau-\beta_r)^{2j-1}}{(\tau-\beta_i)^{2j-1}}
 \right]=0.
\]
Therefore
\[
 1+\sum_{i\in I_r}e^{y_i}
 \frac{(\tau-\beta_r)^{2j-1}}{(\tau-\beta_i)^{2j-1}}=0.
\]
Dividing both sides by $(\tau-\beta_r)^{2j-1}$,
\[
 \frac{1}{(\tau-\beta_r)^{2j-1}}
 +\sum_{i\in I_r}\frac{e^{y_i}}{(\tau-\beta_i)^{2j-1}}=0.
\]
Since $2j-1$ is odd,
\[
 \frac{1}{(\tau-\beta_r)^{2j-1}}=-\frac{1}{(\beta_r-\tau)^{2j-1}},
 \qquad
 \frac{1}{(\tau-\beta_i)^{2j-1}}=-\frac{1}{(\beta_i-\tau)^{2j-1}}.
\]
Substituting and multiplying both sides by $-1$, we obtain
\[
 G_y(\tau):=
 \frac{1}{(\beta_r-\tau)^{2j-1}}
 +\sum_{i\in I_r}\frac{e^{y_i}}{(\beta_i-\tau)^{2j-1}}=0.
\]
Now consider the intervals
\[
 (0,\beta_{q-1}),\ (\beta_{q-1},\beta_{q-2}),\ \dots,\ (\beta_2,1).
\]
On any such interval $I=(\beta_{s+1},\beta_s)$, $\tau$ equals no $\beta_i$, so all
denominators $\beta_i-\tau$ have fixed sign. Differentiating $G_y$,
\[
 G_y'(\tau)=
 \frac{2j-1}{(\beta_r-\tau)^{2j}}
 +\sum_{i\in I_r}\frac{(2j-1)e^{y_i}}{(\beta_i-\tau)^{2j}}>0,
\]
since $2j$ is even and all denominators are positive. Thus $G_y$ is strictly increasing
on $I$.
As $\tau$ approaches the left endpoint $\beta_{s+1}$ from inside the interval:
if $\beta_{s+1}\ne\beta_r$, then for some $l\in I_r$, $\beta_l=\beta_{s+1}$, and
\[
 \beta_l-\tau=\beta_{s+1}-\tau\to0^-,
\]
while $2j-1$ is odd, so
\[
 \frac{e^{y_l}}{(\beta_l-\tau)^{2j-1}}\to-\infty.
\]
The remaining terms stay bounded as $\tau$ is sufficiently close to $\beta_{s+1}$, so
$G_y(\tau)\to-\infty$.
If $\beta_{s+1}=\beta_r$, then the first term
\[
 \frac{1}{(\beta_r-\tau)^{2j-1}}\to-\infty,
\]
the remaining terms stay bounded, and again $G_y(\tau)\to-\infty$.
As $\tau$ approaches the right endpoint $\beta_s$ from inside the interval:
if $\beta_s\ne\beta_r$, then for some $l\in I_r$, $\beta_l=\beta_s$, and
\[
 \beta_l-\tau=\beta_s-\tau\to0^+,
\]
while $2j-1$ is odd, so
\[
 \frac{e^{y_l}}{(\beta_l-\tau)^{2j-1}}\to+\infty.
\]
The remaining terms stay bounded, so $G_y(\tau)\to+\infty$.
If $\beta_s=\beta_r$, then the first term
\[
 \frac{1}{(\beta_r-\tau)^{2j-1}}\to+\infty,
\]
the remaining terms stay bounded, and again $G_y(\tau)\to+\infty$.
By the intermediate value theorem, $G_y$ has exactly one root in each interval.
There are $q-1$ intervals in total, hence exactly $q-1$ roots.
Each root gives a preimage via
\[
 R_i=e^{y_i}\frac{(\tau-\beta_r)^{2j}}{(\tau-\beta_i)^{2j}}>0.
\]
Each inverse branch is smooth, with absolute Jacobian determinant $1/(2j-1)$.

\textbf{Step 4: Inverse-image volume.}
Applying the change-of-variables formula on all $q-1$ inverse branches and summing,
\[
 m_p(T_r^{-1}E)=\frac{q-1}{2j-1}m_p(E).
\]
Iterating gives \eqref{eq:inverse-volume}. The condition $2j>q$ ensures
\[
 \rho=\frac{q-1}{2j-1}<1.
\]
Finally, applying the volume formula to the null set $\Sigma_r$ and taking a countable
union yields $m_p(\mathcal N_r)=0$.
\end{proof}

\section{Thin-band estimate with arbitrary signs}

\begin{lemma}[Thin-band volume estimate]
\label{lem:thin-band}
Let
\[
 H(z)=b+\sum_{l=1}^p a_l e^{z_l},\qquad b\ne0,
\]
where the $a_l$ may be positive, negative, or zero.
Let $\mathcal B\subset\R^p$ be an axis-parallel box of side length $L>0$, i.e.\
\[
 \mathcal B=[c_1,c_1+L]\times\cdots\times[c_p,c_p+L]
\]
for some $c_1,\dots,c_p\in\R$.
For $0<\varepsilon<|b|/2$,
\[
 m_p\bigl(\mathcal B\cap\{|H|<\varepsilon\}\bigr)
 \le\frac{4p^2}{|b|}\varepsilon L^{p-1}.
\]
\end{lemma}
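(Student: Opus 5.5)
The plan is to bound the band by a union of $p$ pieces, one for each coordinate direction. On each piece, $H$ has a partial derivative bounded below in that direction, so one-dimensional slices are short. Fubini (Lemma~\ref{lem:fubini}) then integrates the slice bound over the box.

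\textbf{Step 1 (some exponential term is large).} If $|H(z)|<\varepsilon<|b|/2$, then
\[
 \Bigl|\sum_{l=1}^p a_l e^{z_l}\Bigr|\ge |b|-|H(z)|>|b|/2.
\]
Hence at least one index $l$ satisfies $|a_l|e^{z_l}>|b|/(2p)$. Set
\[
 E_l=\mathcal B\cap\{|H|<\varepsilon\}\cap\{|a_l|e^{z_l}\ge |b|/(2p)\}.
\]
Then $\mathcal B\cap\{|H|<\varepsilon\}\subset\bigcup_{l=1}^p E_l$. Any index with $a_l=0$ gives $E_l=\emptyset$, so we may assume $a_l\neq0$.

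\textbf{Step 2 (one-dimensional slice bound).} Fix $l$ with $a_l\ne0$ and freeze the other coordinates $z'=(z_k)_{k\ne l}$. As a function of $z_l$ alone, $H$ has derivative $\partial_{z_l}H=a_le^{z_l}$. This derivative has the constant sign of $a_l$, so $H$ is strictly monotone in $z_l$ and $\{z_l:|H|<\varepsilon\}$ is an interval. The condition $|a_l|e^{z_l}\ge|b|/(2p)$ defines a half-line $z_l\ge \theta_l$. On the intersection $J$ of these two intervals with $[c_l,c_l+L]$, $H$ is monotone with $|\partial_{z_l}H|\ge\delta:=|b|/(2p)$, while its values stay in $(-\varepsilon,\varepsilon)$. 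By the mean value theorem,
\[
 \delta\,|J|\le 2\varepsilon, \qquad\text{so}\qquad |J|\le \frac{4p\,\varepsilon}{|b|}.
\]

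\textbf{Step 3 (integrate and sum).} Apply Lemma~\ref{lem:fubini} to $\mathbf 1_{E_l}$, integrating first in $z_l$ and then over the $(p-1)$-dimensional face of $\mathcal B$, which has measure $L^{p-1}$. This gives
\[
 m_p(E_l)\le \frac{4p\varepsilon}{|b|}L^{p-1}.
\]
Summing over $l=1,\dots,p$ yields $m_p(\mathcal B\cap\{|H|<\varepsilon\})\le 4p^2\varepsilon L^{p-1}/|b|$, which is the claimed bound.

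\textbf{Main point to check.} Measurability of the $E_l$ is immediate because $H$ is continuous. The step that needs care is Step~2, and the sign pattern of the $a_l$ is exactly what could cause trouble there. The argument works because only the single term $a_le^{z_l}$ depends on $z_l$. Other coefficients of opposite sign are constants along the slice, so they cannot create cancellation in $\partial_{z_l}H$. For this reason no sign assumption on the $a_l$ is needed.
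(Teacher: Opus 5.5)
Your proof is correct and follows the paper's argument. You cover the band by the sets where a single term $|a_l|e^{z_l}$ is at least $|b|/(2p)$, use strict monotonicity in $z_l$ with derivative at least $|b|/(2p)$ to bound each slice by $4p\varepsilon/|b|$, and then apply Fubini and sum over at most $p$ pieces. The one difference is that you obtain the dominant term directly from $\bigl|\sum_l a_le^{z_l}\bigr|\ge |b|-|H|$, whereas the paper first separates the indices with $a_lb<0$ and covers only by those; your shortcut is valid and gives the same constant.
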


\begin{proof}
Let
\[
 J=\{l:a_lb<0\}
\]
be the set of indices whose coefficients have sign opposite to the constant term.
If $J$ is empty, every nonzero $a_l$ has the same sign as $b$, so each term $a_l e^{z_l}$
has the same sign as $b$, no cancellation occurs in the sum, and
\[
 |H|=|b|+\sum_{l:a_l\ne0}|a_l|e^{z_l}\ge |b|>\varepsilon,
\]
so the set to be estimated is empty and the claim is trivial.
Assume henceforth that $J\ne\varnothing$.
On the set $\{|H|<\varepsilon\}$, write
\[
 H=\underbrace{b+\sum_{l\notin J}a_l e^{z_l}}_{H_1}
 +\underbrace{\sum_{l\in J}a_l e^{z_l}}_{H_2}.
\]
For $l\notin J$, $a_lb\ge0$, so $a_l e^{z_l}$ has the same sign as $b$ (or is zero), and
\[
 H_1=b+\sum_{l\notin J}a_l e^{z_l}
\]
has the same sign as $b$, with
\[
 |H_1|=|b|+\sum_{l\notin J}|a_l|e^{z_l}\ge |b|.
\]
Moreover,
\[
 |H_2|=\left|\sum_{l\in J}a_l e^{z_l}\right|
 =\sum_{l\in J}|a_l|e^{z_l}.
\]
By the triangle inequality,
\[
 |H|\ge |H_1|-|H_2|
 \ge |b|-\sum_{l\in J}|a_l|e^{z_l}.
\]
Since $|H|<\varepsilon$,
\[
 |b|-\sum_{l\in J}|a_l|e^{z_l}<\varepsilon,
\]
i.e.
\[
 \sum_{l\in J}|a_l|e^{z_l}
 >|b|-\varepsilon>\frac{|b|}{2}.
\]
Therefore at least one $l\in J$ satisfies
\[
 |a_l|e^{z_l}\ge\frac{|b|}{2p}.
\]
Indeed, if $|a_l|e^{z_l}<|b|/(2p)$ held for every $l\in J$, then
\[
 \sum_{l\in J}|a_l|e^{z_l}<|J|\frac{|b|}{2p}\le p\cdot\frac{|b|}{2p}=\frac{|b|}{2},
\]
a contradiction.
Cover the set to be estimated by at most $p$ sets:
\[
 \mathcal B\cap\{|H|<\varepsilon\}
 \subset
 \bigcup_{l\in J}
 \left(\mathcal B\cap\{|H|<\varepsilon\}\cap
 \{|a_l|e^{z_l}\ge |b|/(2p)\}\right).
\]
Fix $l\in J$. On the set
\[
 \mathcal S_l:=
 \mathcal B\cap\{|H|<\varepsilon\}\cap
 \{|a_l|e^{z_l}\ge |b|/(2p)\},
\]
fix all coordinates except $z_l$. Then $H$ as a function of $z_l$ is
\[
 H=b+\sum_{j\ne l}a_j e^{z_j}+a_l e^{z_l},
\]
with derivative
\[
 \frac{\partial H}{\partial z_l}=a_l e^{z_l}.
\]
The restriction $|a_l|e^{z_l}\ge |b|/(2p)>0$ implies $a_l\ne0$, and
\[
 \frac{\partial H}{\partial z_l}=a_l e^{z_l}
\]
does not change sign on the entire cross-section. Hence $H$ is strictly monotone in $z_l$.
When $z_l$ varies by $\Delta z_l$, the change in function value is at least
\[
 \left|\frac{\partial H}{\partial z_l}\right|\Delta z_l
 \ge \frac{|b|}{2p}\Delta z_l.
\]
The $z_l$-interval satisfying $|H|<\varepsilon$ corresponds to a function-value interval
contained in $(-\varepsilon,\varepsilon)$, of length at most $2\varepsilon$, so
\[
 \frac{|b|}{2p}\Delta z_l\le 2\varepsilon,
\]
i.e.
\[
 \Delta z_l\le \frac{4p\varepsilon}{|b|}.
\]
The projection of the remaining $p-1$ coordinates onto the box $\mathcal B$ has volume at
most $L^{p-1}$. By Fubini's theorem (Lemma~\ref{lem:fubini}),
\begin{align*}
 m_p(\mathcal S_l)
 &=\int_{\R^{p-1}}\left(\int_{\R}
 \mathbf 1_{\mathcal S_l}(z_1,\dots,z_p)\,dz_l\right)
 d(z_j)_{j\ne l}\\
 &=\int_{\R^{p-1}}\Delta z_l\,d(z_j)_{j\ne l}\\
 &\le\frac{4p\varepsilon}{|b|}\cdot L^{p-1}.
\end{align*}
There are at most $p$ such sets $\mathcal S_l$, hence
\[
 m_p\bigl(\mathcal B\cap\{|H|<\varepsilon\}\bigr)
 \le p\cdot\frac{4p\varepsilon}{|b|}L^{p-1}
 =\frac{4p^2}{|b|}\varepsilon L^{p-1}.
\]
This completes the proof.
\end{proof}

\begin{remark}[Why every reference works]
Fix $r$ and define
\begin{equation}
 \begin{split}
 D_r(z)&=\sum_{l\in I_r}(\beta_l-\beta_r)e^{z_l}
       =B(z)(\mu-\beta_r),\\
 H_i^{[r]}(z)&=(\beta_r-\beta_i)
       +\sum_{l\in I_r}(\beta_l-\beta_i)e^{z_l}
       =B(z)(\mu-\beta_i),\qquad i\in I_r.
 \end{split}
 \label{eq:H-D}
\end{equation}
Every $H_i^{[r]}$ has nonzero constant term $\beta_r-\beta_i$.
Hence Lemma~\ref{lem:thin-band} applies for every reference $r$ and every $i\ne r$.
\end{remark}

\section{Per-component Borel--Cantelli estimate}

\begin{lemma}[The logarithmic sum of every eigenspace component diverges exponentially]
\label{lem:all-averages}
Define
\[
 \rho=\frac{q-1}{2j-1}\in(0,1),\qquad p=q-1,
\]
with $\kappa_0$ as in \eqref{eq:kappa0-def}.
Then $\kappa_0>0$, and for each fixed $0<\kappa<\kappa_0$ and each reference
$r\in\{1,\dots,q\}$, there is a null set $Z_r(\kappa)\supset\mathcal N_r$ such that for
every $z_0\notin Z_r(\kappa)$,
\begin{equation}
 \frac{1}{e^{\kappa n}}\sum_{k=0}^{n-1}-\log|\mu_k-\beta_r|
 \longrightarrow+\infty.
 \label{eq:average-r}
\end{equation}
\end{lemma}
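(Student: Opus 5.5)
The plan is to turn the per-cycle recursion \eqref{eq:arbitrary-chart} into a multiplicative, Fibonacci-type growth inequality for the logarithmic gaps. The almost-sure lower bounds needed for this come from combining the inverse-image volume contraction of Lemma~\ref{lem:volume} with the thin-band estimate of Lemma~\ref{lem:thin-band}. Positivity of $\kappa_0$ is immediate from $2j>q$: we have $2j-1\ge q>q-1$, so $\log\frac{2j-1}{q-1}>0$, and $1/(q-2)>0$.

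\textbf{Step 1: almost-sure lower bounds on the band functions.} Fix $r$ and $\kappa<\kappa_0$, and pick $\kappa<\kappa'<\kappa_0$. For each $n$ and each $i\in I_r$ I would take $\varepsilon_n=\exp(-e^{\kappa' n})$ and consider the event
\[
 E_n=\{z_0:\ T_r^n z_0\in \mathcal B_n,\ |H_i^{[r]}(T_r^n z_0)|<\varepsilon_n\}.
\]
Here $\mathcal B_n$ is a box of side $L_n$ with $L_n$ chosen exponentially in $n$. Using \eqref{eq:H-D}, Lemma~\ref{lem:thin-band} applies with $|b|=|\beta_r-\beta_i|$ bounded below. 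Together with \eqref{eq:inverse-volume}, this gives
\[
 m_p(E_n\cap\text{fixed initial box})\lesssim \rho^n\varepsilon_n L_n^{p-1}.
\]
This bound is summable as long as $L_n^{p-1}\rho^n$ grows at most like $e^{\kappa n(p-1)}\rho^n$; the constraint $\kappa(q-1)<\log\frac{2j-1}{q-1}$ in $\kappa_0$ is exactly what makes this work. Lemma~\ref{lem:bc}, applied over a countable exhaustion of $\R^p$ by initial boxes and over all $i\in I_r$, yields a null set $Z_r(\kappa)$. I include $\mathcal N_r$ in it and use Lemma~\ref{lem:lebesgue-zero}. Off $Z_r(\kappa)$, for all large $n$ the bound $|H_i^{[r]}(z_n)|\ge\varepsilon_n$ holds whenever $z_n\in\mathcal B_n$. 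Equivalently, $\log|\mu_n-\beta_i|\ge -e^{\kappa' n}-\log B(z_n)$ for every $i\ne r$.

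\textbf{Step 2: growth of the coordinates and localisation in $\mathcal B_n$.} By \eqref{eq:arbitrary-chart}, each coordinate satisfies $z_{n,i}=z_{0,i}+2j\sum_{k<n}\log|\mu_k-\beta_i|+2jS_n$, where
\[
 S_n=\sum_{k<n}-\log|\mu_k-\beta_r|.
\]
Step 1 bounds the middle sums from below, and the trivial bound $|\mu-\beta_i|\le1$ bounds them from above. This pins $z_n$ into a box of side comparable to $S_n$ plus errors, which is what justifies the choice of $\mathcal B_n$. The argument is therefore an induction: the box used at time $n$ is controlled by the estimate at earlier times. I expect this to need some bootstrapping, e.g.\ via a second exceptional set for the event that $z_n$ leaves $\mathcal B_n$.

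\textbf{Step 3: the self-improving inequality (the main obstacle).} It remains to show that $S_n$ grows doubly exponentially. Since $\mu_k-\beta_r=D_r(z_k)/B(z_k)$, the quantity $-\log|\mu_k-\beta_r|$ is large exactly when the dominant ratio $v_r$ drives $\mu$ onto $\beta_r$. I would argue that the spread $\max_i z_{n,i}-\min_i z_{n,i}$ of the log-ratios is amplified each cycle by a factor at least $2j-1$, minus losses of order $e^{\kappa' n}$ from Step 1. The reason is that when one component dominates, $|\mu-\beta_{\text{dom}}|\approx e^{-\text{gap}}$, and this feeds back with weight $2j$ against weight $1$ from the old coordinate. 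The $1/(q-2)$ term in $\kappa_0$ should come from the fact that the relevant dominant index may rotate among the $q-2$ interior eigenvalues before returning to $r$. Getting a clean linear recursion of the form $S_{n+q-1}\ge(2j-1)S_n-(\text{lower order})$, uniformly over which component dominates, is the step I expect to be hardest. I would first try it for $q=2$, where $\mu$ is an explicit monotone function of one variable, and then handle general $q$ by tracking the largest and second-largest $z_{n,i}$. Once such a recursion holds, $S_n\ge c\,e^{\kappa'' n}$ for some $\kappa''>\kappa$, and dividing by $e^{\kappa n}$ gives \eqref{eq:average-r}.
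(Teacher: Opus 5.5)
\textbf{There is a genuine gap: everything rests on Step~3, which is left as a heuristic and would not close as formulated.} Your check that $\kappa_0>0$ is fine, but the proof of \eqref{eq:average-r} does not go through. Three problems stand out.

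First, the losses you import from Step~1 are of size $e^{\kappa' n}$ with $\kappa'>\kappa$. That is larger than the target growth $e^{\kappa n}$, not lower order. Nothing in your argument guarantees that $S_n$ ever rises above this loss scale so that a recursion could take over.

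Second, the inequality $S_{n+q-1}\ge(2j-1)S_n-(\text{lower order})$ for a fixed $r$ looks doubtful even heuristically. $S^{[r]}_n$ increases substantially only on cycles where $\mu_k$ is close to $\beta_r$, essentially when $r$ is the dominant component. Between such cycles it can stay nearly flat over long stretches.

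Third, Step~2 is circular. The Step~1 bound $\log|\mu_k-\beta_i|\ge -e^{\kappa' k}-\log B(z_k)$ is available only while $z_k\in\mathcal B_k$, and it involves $\log B(z_k)$. Using it therefore requires an a priori upper bound on the coordinates, hence on $S_k$, which you do not have. Making Step~3 rigorous would amount to making the Dai--Fletcher simple-model analysis rigorous for the real map, which is exactly what the paper's argument avoids.

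\textbf{The missing idea is to apply Borel--Cantelli directly to the bad event, so that the bad event itself supplies the box.} Take $E_n=\{z_0\in[-R,R]^p\setminus\mathcal N_r:\ S_n^{[r]}\le Me^{\kappa n}\}$. The argument runs as follows.
\begin{itemize}
\item On $E_n$, the trivial bound $|\mu_k-\beta_i|\le1$ gives $z_{i,k}\le U_n:=R+2jMe^{\kappa n}$ for all $k\le n$.
\item As long as $|H_i^{[r]}(z_k)|\ge e^{-n}$ for all $i\in I_r$ (only an exponentially small threshold), you get $z_{i,k+1}\ge z_{i,k}-2j(n+U_n+\log(p+1))$. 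Hence $z_k$ stays in $\mathcal B_n:=[-V_n,U_n]^p$ with $V_n=O(ne^{\kappa n})$.
\item Therefore $E_n\subset T_r^{-n}\mathcal B_n\cup\bigcup_{k<n}T_r^{-k}\mathcal C_n$, where $\mathcal C_n=\mathcal B_n\cap\bigcup_{i\in I_r}\{|H_i^{[r]}|<e^{-n}\}$.
\item Lemmas~\ref{lem:volume} and~\ref{lem:thin-band} give $m_p(E_n)=O\bigl(n^p(e^{\kappa p}\rho)^n+n^{p-1}e^{(\kappa(p-1)-1)n}\bigr)$, which is summable whenever $\kappa<\kappa_0$.
\item Countable unions over $R$ and $M$ finish the proof, with no analysis of which component dominates.
\end{itemize}

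\textbf{This also shows your bookkeeping of $\kappa_0$ is off.} The term $\frac1{q-1}\log\frac{2j-1}{q-1}$ comes from $\rho^n m_p(\mathcal B_n)\approx\rho^n e^{\kappa pn}$, which uses the full box volume $L^p$, not $L^{p-1}$. The term $\frac1{q-2}$ comes from the thin-band contribution $e^{\kappa(p-1)n}e^{-n}$, not from rotation among interior eigenvalues. With your doubly exponential $\varepsilon_n$, the Step~1 series converges for every $\kappa$ and every exponential $L_n$. That alone signals that the constraints defining $\kappa_0$ must enter somewhere else.
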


\begin{proof}
Fix $0<\kappa<\kappa_0$. On orbits avoiding $\mathcal N_r$, set
\[
 \ell_k^{[r]}=-\log|\mu_k-\beta_r|>0,\qquad
 S_n^{[r]}=\sum_{k=0}^{n-1}\ell_k^{[r]}.
\]
Fix positive integers $R,M$ and define the bad event
\[
 E_{n,r}(R,M,\kappa)
 =\{z_0\in[-R,R]^p\setminus\mathcal N_r:
 S_n^{[r]}\le M e^{\kappa n}\}.
\]
Take $z_0\in E_{n,r}(R,M,\kappa)$, i.e.\ $S_n^{[r]}\le M e^{\kappa n}$.
Since $\ell_k^{[r]}>0$, for any $0\le k\le n$,
\[
 S_k^{[r]}\le S_n^{[r]}\le M e^{\kappa n}.
\]
From \eqref{eq:arbitrary-chart},
\[
 z_{i,k+1}
 =z_{i,k}+2j\log|\mu_k-\beta_i|-2j\log|\mu_k-\beta_r|.
\]
Since $|\mu_k-\beta_i|\le1$,
\[
 z_{i,k+1}\le z_{i,k}+2j\ell_k^{[r]}.
\]
Summing,
\[
 z_{i,k}\le z_{i,0}+2jS_k^{[r]}
 \le R+2jM e^{\kappa n}=:U_n.
\]
Thus all coordinates have the upper bound $U_n$.

We next estimate the lower bound. From \eqref{eq:H-D},
\[
 D_r(z_k)=B(z_k)(\mu_k-\beta_r),\qquad
 H_i^{[r]}(z_k)=B(z_k)(\mu_k-\beta_i).
\]
Since $B(z_k)\le (p+1)e^{U_n}$,
\[
 \log|D_r(z_k)|\le U_n+\log(p+1).
\]
If at step $k$ all $i\ne r$ satisfy
\[
 |H_i^{[r]}(z_k)|\ge e^{-n},
\]
then
\[
 z_{i,k+1}
 \ge z_{i,k}-2j(n+U_n+\log(p+1)).
\]
If the condition has held at all steps up to $k$, then
\[
 z_{i,k}
 \ge -R-2j n(n+U_n+\log(p+1)).
\]
Set
\[
 V_n=R+2j n(n+U_n+\log(p+1)),\qquad
 \mathcal B_n=[-V_n,U_n]^p.
\]
If the condition held for all $n$ steps, then $z_n\in\mathcal B_n$, i.e.\
$z_0\in T_r^{-n}\mathcal B_n$.
Otherwise, let $k<n$ be the first step at which the condition is violated; then
$z_k\in\mathcal B_n$, and for some $i\ne r$,
\[
 |H_i^{[r]}(z_k)|<e^{-n}.
\]
Hence
\[
 z_k\in\mathcal C_n
 :=\mathcal B_n\cap
 \bigcup_{i\in I_r}\{|H_i^{[r]}|<e^{-n}\}.
\]
Therefore
\begin{equation}
 E_{n,r}(R,M,\kappa)\subset
 T_r^{-n}\mathcal B_n\ \cup\
 \bigcup_{k=0}^{n-1}T_r^{-k}\mathcal C_n.
 \label{eq:bad-event-cover}
\end{equation}

We estimate the volumes.
Since $U_n=O(e^{\kappa n})$ and $V_n=O(n e^{\kappa n})$, the side length of $\mathcal B_n$
is $O(n e^{\kappa n})$, so
\[
 m_p(\mathcal B_n)=O(n^p e^{\kappa p n}).
\]
For $\mathcal C_n$, apply the thin-band Lemma~\ref{lem:thin-band} with
$\varepsilon=e^{-n}$ and box side length $L=O(n e^{\kappa n})$:
\[
 m_p(\mathcal C_n)=O(n^{p-1} e^{\kappa (p-1) n} e^{-n}).
\]
By Lemma~\ref{lem:volume} and \eqref{eq:bad-event-cover},
\begin{align*}
 m_p(E_{n,r}(R,M,\kappa))
 &\le \rho^n m_p(\mathcal B_n)
 +\sum_{k=0}^{n-1}\rho^k m_p(\mathcal C_n)\\
 &\le C \left(n^p e^{\kappa p n}\rho^n
 +n^{p-1} e^{\kappa (p-1) n} e^{-n}\right),
\end{align*}
where $\rho=(q-1)/(2j-1)<1$. Throughout, the implicit constants depend only on
$A,j,R,M,\kappa$.

We first note $\kappa_0>0$. Since $2j>q$ and $q\ge2$, we have $2j-1\ge q\ge2$, hence
$\rho<1$ and $-\log\rho>0$. When $q=2$, $\kappa_0=\log(2j-1)>0$. When $q\ge3$,
$\kappa_0$ is the smaller of two positive numbers, so $\kappa_0>0$.

When $q=2$, $p=1$ and $\kappa_0=\log(2j-1)$, so
\[
 e^{\kappa p}\rho
 =e^{\kappa}\rho
 <e^{\kappa_0}\rho
 =(2j-1)\cdot\frac{1}{2j-1}=1.
\]
The second term $e^{\kappa(p-1)n}e^{-n}=e^{-n}$ decays exponentially on its own.

When $q\ge3$, $p\ge2$, and $\kappa_0$ is the smaller of two numbers:
\[
 \kappa_0\le\frac{1}{p}\log\frac1\rho
 \implies e^{\kappa_0 p}\rho\le1,
\]
\[
 \kappa_0\le\frac{1}{p-1}
 \implies e^{\kappa_0(p-1)}e^{-1}\le1.
\]
Since $\kappa<\kappa_0$, both inequalities are strict:
\[
 e^{\kappa p}\rho<1,\qquad
 e^{\kappa(p-1)}e^{-1}<1.
\]
Therefore both series
\[
 \sum_{n=1}^\infty n^p (e^{\kappa p}\rho)^n,\qquad
 \sum_{n=1}^\infty n^{p-1} (e^{\kappa(p-1)}e^{-1})^n
\]
converge, and hence
\[
 \sum_{n=1}^\infty m_p(E_{n,r}(R,M,\kappa))<\infty.
\]
By the Borel--Cantelli lemma, for almost every starting point in the box, eventually
$S_n^{[r]}>M e^{\kappa n}$ always holds.
Taking a countable union over $R,M$ and adjoining $\mathcal N_r$ yields the null set
$Z_r(\kappa)$ and \eqref{eq:average-r}.
\end{proof}

\section{From individual components to the whole actual iteration sequence}
\begin{proof}[Proof of Theorem~\ref{thm:full}]
Fix $\kappa_0$ as in \eqref{eq:kappa0-def} and set
\[
 \kappa_N=\kappa_0\left(1-\frac1N\right),\qquad N=2,3,\dots
\]
so $0<\kappa_N<\kappa_0$ and $\kappa_N\uparrow\kappa_0$.

We first handle the exceptional set in the amplitude space
$v_0=(v_{1,0},\dots,v_{q,0})\in(0,\infty)^q$.
For each $r$ and each $N$, Lemma~\ref{lem:all-averages} gives a null set
$Z_r(\kappa_N)\subset\R^p$. The map
\[
 \Phi_r:(0,\infty)\times\R^p\longrightarrow(0,\infty)^q,
 \qquad v_r=s,\quad v_i=s e^{z_i/2}\ (i\ne r)
\]
is a smooth diffeomorphism. By Fubini's theorem, $(0,\infty)\times Z_r(\kappa_N)$ is a null
set in $q$ dimensions; since $\Phi_r$ is a smooth diffeomorphism and maps null sets to null
sets, its image is also null in amplitude space.
Taking a countable union over $r=1,\dots,q$ and $N=2,3,\dots$ gives a null set
$\mathcal V$ in amplitude space.
Thus, for every $v_0\notin\mathcal V$, for all $r$ and all $N$,
\[
 \frac{S_n^{[r]}}{e^{\kappa_N n}}\to+\infty,
\]
and the orbit avoids all singular sets on which a component vanishes.

Now fix $v_0\notin\mathcal V$ and any $0<\kappa<\kappa_0$.
Choose $N$ large enough that $\kappa<\kappa_N$. Since
\[
 \frac{S_n^{[r]}}{e^{\kappa_N n}}\to+\infty,
\]
we have
\[
 \frac{S_n^{[r]}}{e^{\kappa n}}
 =\frac{S_n^{[r]}}{e^{\kappa_N n}}\,e^{(\kappa_N-\kappa)n}
 \to+\infty.
\]
Therefore for each $r$,
\begin{equation}
 S_n^{[r]}\ge C_r e^{\kappa n}
 \label{eq:lower-S}
\end{equation}
for all sufficiently large $n$.

From \eqref{eq:radial-dynamics} and \eqref{eq:abs-factor}, for each $r$,
\begin{align*}
 v_{r,n}
 &=v_{r,0}\exp\left(j\sum_{k=0}^{n-1}\log|\mu_k-\beta_r|
 -j\sum_{k=0}^{n-1}\log(c+\mu_k)\right)\\
 &=v_{r,0}\exp\left(-j S_n^{[r]}
 -j\sum_{k=0}^{n-1}\log(c+\mu_k)\right).
\end{align*}
Since $c+\mu_k\in[c,c+1]$,
\[
 -j\sum_{k=0}^{n-1}\log(c+\mu_k)=O(n).
\]
Combining with \eqref{eq:lower-S},
\[
 v_{r,n}\le \exp(-C'_r e^{\kappa n})
\]
for all sufficiently large $n$. Hence
\[
 \norm{g_n}\le q^{1/2}\max_{1\le r\le q}v_{r,n}
 \le \exp(-C e^{\kappa n})
\]
for all sufficiently large $n$.

We now lift the null set in amplitude space to the original gradient space to cover the
case of repeated eigenvalues.
Let the $i$-th eigenspace have dimension $d_i$, with $\sum_i d_i=d$.
In each region with nonzero projection, write in block polar coordinates
\[
 \mathcal P_i g_0=v_{i,0}\omega_i,\qquad
 v_{i,0}>0,\quad \omega_i\in S^{d_i-1}.
\]
Since $I-t_kA$ acts as a scalar $1-t_k\lambda_i$ on the $i$-th eigenspace, the direction
$\omega_i$ is preserved along the iteration; only the radius $v_{i,k}$ evolves, so tracking
the $v_i$'s suffices.
In these coordinates the Lebesgue volume element is
\[
 \prod_{i=1}^q v_{i,0}^{d_i-1}\,dv_{i,0}\,d\sigma_i(\omega_i).
\]
The function $v\mapsto \prod_i v_i^{d_i-1}$ is locally integrable on $(0,\infty)^q$.
Since $\mathcal V$ is a Lebesgue null set in the amplitude variables,
\[
 \int_{\mathcal V}\prod_i v_i^{d_i-1}\,dv=0.
\]
By the Fubini--Tonelli theorem,
\[
 \int_{\prod_i S^{d_i-1}}\int_{\mathcal V}
 \prod_i v_i^{d_i-1}\,dv\,d\sigma=0.
\]
Hence the corresponding set in the original gradient space is also null.
When $d_i=1$, the angular variable is only a sign and is treated by finite counting.
Any set on which some projection is zero is a finite union of proper linear subspaces and
is likewise null. Finally, through the invertible affine map
\[
 \Psi:\R^d\to\R^d,\qquad \Psi(u_0)=Au_0+b,
\]
we obtain the exceptional set in the initial-point space. This map is an affine
homeomorphism and maps Lebesgue null sets to null sets. Thus, if
$\mathcal G\subset\R^d$ is null in gradient space, then
\[
 \mathcal E_{A,j}=\Psi^{-1}(\mathcal G)
\]
is null in the initial-point space.

To obtain the conclusion for every actual iterate, set
\[
 K=\max\{1,\lambda_1/\lambda_q-1\}.
\]
The Rayleigh quotient bounds for the step size give
\[
 \frac1{\lambda_1}\le t_k\le \frac1{\lambda_q}.
\]
Since $I-t_kA$ has eigenvalues $1-t_k\lambda_i$, their absolute values are maximized at the
extreme eigenvalues:
\[
 \max_i|1-t_k\lambda_i|\le
 \max\left\{\left|1-\frac{\lambda_1}{\lambda_q}\right|,\left|1-\frac{\lambda_q}{\lambda_1}\right|\right\}
 =\frac{\lambda_1}{\lambda_q}-1\le K.
\]
Hence
\[
 \norm{I-t_kA}\le K.
\]
For $m=jk+s$ with $0\le s<j$,
\[
 \norm{h_m}
 =\norm{(I-t_kA)^s g_k}
 \le K^{j-1}\norm{g_k}.
\]
Hence
\[
 \norm{h_m}
 \le K^{j-1}\exp(-C e^{\kappa k}).
\]
Since $m=jk+s$, we have $k\ge m/j-1$, and therefore
\[
 e^{\kappa k}\ge e^{\kappa(m/j-1)}=e^{-\kappa}e^{\kappa m/j}.
\]
Consequently,
\[
 \norm{h_m}
 \le \exp(-C' e^{(\kappa/j)m})
\]
for all sufficiently large $m$, where $C'=C e^{-\kappa}$.
Since $u_m-u^*=A^{-1}h_m$,
\[
 \norm{u_m-u^*}
 \le \norm{A^{-1}}\norm{h_m}
 \le \exp(-C'' e^{(\kappa/j)m})
\]
for all sufficiently large $m$.
If $q=1$, then $A=\lambda_1I$ and the exact line search reaches the optimum in one step.
This completes the proof.
\end{proof}

\section*{Acknowledgements}
This work was supported by the National Key R\&D Program of China under Grant
No.~2022YFA1003800, the National Natural Science Foundation of China under Grant
No.~12671364, and the Natural Science Foundation of Tianjin under Grant
No.~25JCJQJC00300.

\bibliographystyle{plain}
\bibliography{cyclicSD_refs}

\end{document}